\setlist[enumerate]{
  labelsep=8pt,
  labelindent=2\parindent,
  itemindent=0pt,
  leftmargin=*,
  before=\setlength{\listparindent}{-\leftmargin},
}
\pgfmathsetmacro\weight{1/2}
\pgfmathsetmacro\third{1/3}
\pgfmathsetmacro\twothirds{2/3}
\tikzset{degil/.style={
            decoration={markings,
            mark= at position 0.5 with {
                  \node[transform shape] (tempnode) {$/$};
                  }
              },
              postaction={decorate}
}
}
\newtheorem{theorem}{Theorem}[section]
\newtheorem{lemma}[theorem]{Lemma}
\newtheorem{definition}[theorem]{Definition}
\newtheorem{remark}[theorem]{Remark}
\newcounter{syscounter}
\newcommand \N   {\mathbb{N}}
\newcommand \R   {\mathbb{R}}
\newcommand \K   {\mathcal{K}}
\newcommand \Kinf{\mathcal{K_\infty}}
\newcommand \KL  {\mathcal{KL}}
\newcommand \LL  {\mathcal{L}}
\newcommand{\thetamax}{\theta_{p}}
\newcommand \eps {\varepsilon}
\newcommand{\XX}{\mathcal{X}}
\newcommand{\UU}{\mathcal{U}}
\newcommand{\VV}{\mathcal{V}}
\renewcommand{\leq}{\leqslant}
\renewcommand{\geq}{\geqslant}
\newcommand{\prop}[4]{(\textnormal{#1}, #2, #3)\text{ is }\textnormal{#4}}
\newcommand{\xinffc}{\prop{\ref{syst}}{\XX^\infty}{\UU}{FC}}
\newcommand{\xinfbrs}{\prop{\ref{syst}}{\XX^\infty}{\UU}{BRS}}
\newcommand{\xzerofc}{\prop{\ref{syst}}{\XX^0}{\UU}{FC}}
\newcommand{\xzerobrs}{\prop{\ref{syst}}{\XX^0}{\UU}{BRS}}
\newcommand{\zinffc}{\prop{\ref{syst_input}}{\R^n}{\VV^\infty\times\UU}{FC}}
\newcommand{\zinfbrs}{\prop{\ref{syst_input}}{\R^n}{\VV^\infty\times\UU}{BRS}}
\newcommand{\zzerofc}{\prop{\ref{syst_input}}{\R^n}{\VV^0\times\UU}{FC}}
\newcommand{\zzerobrs}{\prop{\ref{syst_input}}{\R^n}{\VV^0\times\UU}{BRS}}
\newcommand{\propnou}[3]{(\textnormal{#1}, #2)\text{ is }\textnormal{#3}}
\newcommand{\xinffcnou}{\propnou{\ref{syst}}{\XX^\infty}{FC}}
\newcommand{\xinfbrsnou}{\propnou{\ref{syst}}{\XX^\infty}{BRS}}
\newcommand{\xzerobrsnou}{\propnou{\ref{syst}}{\XX^0}{BRS}}
\newcommand{\gas}{\propnou{\ref{syst_noinputs}}{\XX^\infty}{GAS}}
\newcommand{\ugas}{\propnou{\ref{syst_noinputs}}{\XX^\infty}{UGAS}}
\newcommand{\zerogas}{\propnou{\ref{syst_noinputs}}{\XX^0}{GAS}}
\newcommand{\zerougas}{\propnou{\ref{syst_noinputs}}{\XX^0}{UGAS}}
\newif\ifAndo
\title{\LARGE \bf
Forward completeness implies bounded reachable sets
for
time-delay systems
on the state space of 
essentially bounded measurable functions
}
\author{
Lucas Brivadis,
Antoine Chaillet,
Andrii Mironchenko and Fabian Wirth\\
\thanks{L. Brivadis and A. Chaillet are with Université Paris-Saclay, CNRS, CentraleSupélec, Laboratoire des Signaux et Systèmes, 91190, Gif-sur-Yvette, France. Emails: {\tt\small (firstname).(lastname)@centralesupelec.fr}}
\thanks{A. Mironchenko is with the Department of Mathematics, University of Klagenfurt, 9020, Klagenfurt, Austria. 
Email:
{\tt\small andrii.mironchenko@aau.at}
}
\thanks{F. Wirth is with Faculty of Computer Science and Mathematics, University of Passau,
Innstra\ss e 33, 94032 Passau, Germany. Email:
{\tt\small fabian.(lastname)@uni-passau.de}
}
\thanks{This research has been supported by BayFrance (Franco-Bavarian University cooperation center) within the project ``Input-to-State Stability of Systems with Delays'', project FK-20-2022. A. Mironchenko has been supported by the German Research Foundation (DFG) (grant MI 1886/2-2).}
}
\begin{document}

\maketitle
\thispagestyle{empty}
\pagestyle{empty}

\begin{abstract}

We consider time-delay systems with a finite number of delays in the state space $L^\infty\times\R^n$. In this framework, we show that forward completeness implies the bounded reachability sets property, while this implication 
was recently shown by J.L. Mancilla-Aguilar and H. Haimovich to fail in the state space of continuous functions.
As a consequence, we show that global asymptotic stability is always uniform in the state space $L^\infty\times\R^n$.

\end{abstract} 

{\small\textit{\textbf{Keywords---}}} nonlinear control systems, time-delay systems, infinite-dimensional systems, forward completeness, input-to-state stability.







\section{Introduction}




A control system is called forward complete (FC)
if for any initial condition $x_0$, and any input $u$, the corresponding trajectory $x(\cdot;x_0,u)$ is well-defined on the whole nonnegative time axis $\R_+$.
If additionally, for any magnitude $r>0$ and any time $T>0$,
\[
\sup_{\|x_0\| \leq r,\ \|u\| \leq r,\ t\in[0,T]} |x(t;x_0,u)| <+\infty,
\] 
then a control system is said to have bounded reachability sets (BRS). 


BRS establishing uniform bounds for solutions on finite time intervals is a bridge between the pure well-posedness theory (that studies existence and uniqueness but does not care much about the bounds for solutions) and the stability theory (which is interested in establishing certain bounds for solutions for all nonnegative times, as well as their convergence). BRS (and the closely related notion of robust forward completeness \cite{KaJ11b}) proved to be useful in many contexts, such as derivation of converse Lyapunov theorems for global asymptotic stability \cite{LSW96}, characterization of input-to-state stability for nonlinear systems \cite{MiW18b}, non-coercive Lyapunov methods \cite{MiW19a, MiW18b, JMP20}, or criteria for global asymptotic stability for retarded systems \cite{karafyllis2022global}, to name a few.

This motivated the development of criteria for the BRS property. In \cite{LSW96}, it was shown that FC and BRS are equivalent properties for ordinary differential equations (ODEs) with Lipschitz continuous right-hand side and Lebesgue measurable essentially bounded inputs.
If the inputs are a priori uniformly bounded in magnitude, and the dynamics are locally Lipschitz (jointly in the state and inputs), these properties are also equivalent to the existence of a Lyapunov function that increases at most exponentially along solutions \cite{AnS99}.
A Lyapunov criterion for robust forward completeness of general infinite-dimensional systems has been shown in \cite{Mir23e}.

Recent studies have revealed that for inifinite-dimensional systems, the relation between FC and BRS is rather complex. 
For linear infinite-dimensional systems, FC does imply BRS \cite[Proposition 2.5]{Wei89b}. However, even in the absence of inputs, this implication no longer holds when dealing with nonlinear infinite-dimensional systems, as demonstrated in \cite[Example 2]{MiW18b} for infinite ODE networks (aka ensembles).
In the specific context of time-delay systems, it was recently proved in \cite{MaH23} that FC does not necessarily ensure BRS when the considered state space is given by continuous functions. 
Nevertheless, when considering more regular state spaces, such as Sobolev and Hölder spaces, FC and BRS properties turn out to be equivalent \cite{karafyllis2022global}
for autonomous systems without inputs (the question remains open for systems with inputs and would require adapting the proof of \cite{LSW96} while using the compactness arguments of \cite{karafyllis2022global}).


In this paper, \emph{we consider the evolution of time-delay systems on a wider state space, namely $L^\infty\times\R^n$. In this setup, we show that FC is again equivalent to BRS.}
We introduce time-delay systems on this new state space in Section \ref{sec:linf} and relate solutions of time-delay systems with solutions of ODEs with well-chosen inputs.
Our main result is stated and proved in Section \ref{sec:main}.
In Section \ref{sec:gas}, we prove that global asymptotic stability is necessarily uniform in this particular state space.

\vspace{3mm}
\subsubsection*{Notation}

Let $I$ be an interval of $\R$ and $n$ be a positive integer.
For a given function $f:I\to \R^n$, we say that $f$ belongs to
$\LL^\infty(I, \R^n)$ if it is Lebesgue measurable and essentially bounded, and to $C^0(I, \R^n)$ if it is continuous.
The restriction of $f$ to $J\subset I$ is denoted by $f|_J$.
We denote by $L^\infty(I, \R^n)$ (resp. $L^\infty_\textrm{loc}(I, \R^n)$) the space of Lebesgue measurable essentially bounded (resp. essentially bounded on any bounded subset of $I$) functions quotiented by the space of almost always null functions.
Moreover, let $\K$ be the set of continuous functions from $\R_+$ to $\R_+$ that are increasing and null at $0$, 
and $\Kinf$ the set of unbounded functions in $\K$.
We say that a continuous function $\beta:\R_+^2\to \R_+$ is of class $\KL$ if $\beta(\cdot, s)$ is of class $\K$ for all $s\in\R_+$ and $\beta(r,\cdot)$ is decreasing and tends towards $0$ at infinity for all $r>0$.

\section{Time-delay systems in \texorpdfstring{$L^\infty\times\R^n$}{Linf x Rn}}
\label{sec:linf}

Let $n, m$ and $p$ be positive integers.
Consider a control system with a finite number of delays
\begin{equation}\label{syst}
\tag{TDS}
\dot x(t) = f(x(t), (x(t-\theta_k))_{k\in\{1,\dots,p\}}, u(t)),
\end{equation}
where $x(t)\in\R^n$,
$u(t)\in\R^m$ is the input, $(\theta_k)_{k\in\{1,\dots,p\}}\in\R_{>0}^p$ are the delays,
and $f:\R^n\times (\R^n)^{p}\times\R^m\to\R^n$ is continuous and locally Lipschitz with respect to its first variable (in $\R^{n}\times (\R^n)^{p}$), uniformly with respect to the last variables (in $\R^m$).
With no loss of generality, we may assume that all the delays are distinct
and that $0<\theta_1<\dots<\theta_p$.
We consider $\UU = L^\infty_\textrm{loc}((0, +\infty), \R^m)$ to be the input space, as commonly assumed in the robustness analysis literature \cite{CKP23}.

In this paper, we consider two different state spaces. 
The first one is the usual space of continuous functions $$\XX^0:=C^0([-\thetamax, 0], \R^n).$$
The second one aims at allowing a wider class of initial states, namely Lebesgue measurable essentially bounded signals. However, one cannot simply consider $L^\infty((-\thetamax, 0), \R^n)$ as a state space because the value of the initial condition at the initial time must be specified for the Cauchy problem to make sense. For this reason, and following \cite{DELFOUR1972213, bensoussan2007representation}, we define $\XX^\infty$ as the space of measurable essentially bounded functions
quotiented by the space of almost everywhere null functions that are moreover null at 0:
\begin{align*}
\XX^\infty:=
&
\LL^\infty([-\thetamax, 0], \R^n)
\\
&\qquad
\Big/
\Big\{
x_0\in\LL^\infty([-\thetamax, 0], \R^n)\Big\vert
\begin{cases}
\|x_0\|_{L^\infty} =0,
\\
x_0(0)=0
\end{cases} \hspace{-2mm}
\Big\}.
\end{align*}
One can easily check that $\XX^\infty$ endowed with the norm defined by
$\|x_0\|_{\XX^\infty} := \max(\|x_0\|_{L^\infty}, |x_0(0)|)$
for all $x_0\in\XX^\infty$
is a Banach space, and that it is isometrically isomorphic to $L^\infty((-\thetamax, 0), \R^n)\times\R^n$.
Roughly speaking, an element in $\XX^\infty$ is an element of $x\in L^\infty((-\thetamax, 0), \R^n)$ as well as a point $x(0)\in\R^n$ defining the value of the function $x$ at $0$.

Let us introduce the following notion of solution.


\begin{definition}[$\XX$-solutions]\label{def:sol}
Let $\XX$ be either $\XX^0$ or $\XX^\infty$.
Consider an initial condition $x_0\in\XX$.
Let $u\in \UU$ and $T\in\R_{>0}\cup\{+\infty\}$.
We say that $x\in C^0([0, T), \R^n)$ is a \emph{$\XX$-solution over $[0, T)$} to the Cauchy problem \eqref{syst} initialized at $x_0$ if $x(0) = x_0(0)$, $x$ is absolutely continuous on any compact subinterval of $[0, T)$,
and satisfies 
\begin{equation*}
\dot x(t) = f(x(t), ((x_0\diamond x)(t-\theta_k))_{k\in\{1,\dots,p\}}, u(t))
\end{equation*}
for almost all $t\in (0, T)$. Here $x_0\diamond x:[-\thetamax, T)\to\R^n$ is defined almost everywhere by
\begin{equation*}
    (x_0\diamond x)(s) := \begin{cases}
        x(s),&\text{for all } s\in(0, T),
        \\
        x_0(s),&\text{for almost all } s\in[-\thetamax, 0].
    \end{cases}
\end{equation*}
\end{definition}




\begin{remark}\label{rem:cont}
Note that $\XX^0$-solutions are the usual continuous solutions of \eqref{syst} (see \cite[Part 2, Chapter 4.3]{bensoussan2007representation}).
Moreover, both for $\XX^0$ and $\XX^\infty$-solutions, we have that
solutions are continuous functions of time from $t=0$.
\end{remark}

Although the results guaranteeing the existence and uniqueness of solutions are known for initial conditions in $\mathcal X^0$ (see, e.g., \cite[Chapter 2.2]{Hal77} or \cite[Part II, Chapter 4.3.1]{bensoussan2007representation}),
the case of $\XX^\infty$ received less attention. Nevertheless, we show below that existing results can easily be adapted to this state space in the case of a finite number of delays.



\begin{theorem}[Existence, uniqueness]\label{thm-existence}
Let $\XX$ be either $\XX^0$ or $\XX^\infty$.
Given any initial conditions $x_0\in\XX$ and any $u\in\UU$, 
there exists the unique maximal $\XX$-solution $x(\cdot,x_0,u)$ of
the Cauchy problem \eqref{syst} initialized at $x_0$ corresponding to an input $u$. 
Denote $x_t(x_0, u): [-\thetamax, 0]\ni \theta \mapsto x(t+\theta; x_0, u)$.

Furthermore, the system \eqref{syst}
 is a control system in the sense of \cite{MiP20}, except that solutions may not be continuous functions in the topology of $\XX$. We denote this system $(\textnormal{\ref{syst}},\XX,\UU)$.
\end{theorem}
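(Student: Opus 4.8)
The plan is to prove existence and uniqueness via the classical method of steps, reducing \eqref{syst} on each delay interval to an ordinary differential equation with Carath\'eodory right-hand side, and then to verify the structural axioms of \cite{MiP20} directly. First I would note that on $[0,\thetamin]$ every delayed argument $t-\theta_k$ lies in $[-\thetamax,0]$, so the delayed states are entirely determined by $x_0$; hence a continuous $x$ with $x(0)=x_0(0)$ is an $\XX$-solution of \eqref{syst} on $[0,\tau]\subseteq[0,\thetamin]$ if and only if it solves $\dot x(t)=g(t,x(t))$, $g(t,y):=f(y,(x_0(t-\theta_k))_{k\in\{1,\dots,p\}},u(t))$. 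Since $f$ is continuous and locally Lipschitz in its first variable uniformly in the others, and since $x_0\in\LL^\infty$ and $u\in\UU$ confine $(x_0(t-\theta_k))_k$ and $u(t)$ to a fixed bounded set for a.e.\ $t\in[0,\thetamin]$, the map $g$ is measurable in $t$, locally bounded by an integrable function, and continuous and locally Lipschitz in $y$ uniformly in $t$. Carath\'eodory's existence theorem together with a Gronwall estimate then yields a unique maximal solution on some $[0,T_1)$ with $T_1\in(0,\thetamin]$, escaping to infinity at $T_1$ if $T_1<\thetamin$.

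I would then iterate over the intervals $[j\thetamin,(j+1)\thetamin]$, $j\in\N$: on each of them all delayed arguments lie in $[-\thetamax,j\thetamin]$, a region on which $x_0\diamond x$ is already determined once the solution reaches $j\thetamin$, so freezing these terms turns \eqref{syst} into a Carath\'eodory ODE whose unique maximal solution prolongs $x$. This produces the maximal $\XX$-solution $x(\cdot\,;x_0,u)$, defined on some $[0,T)$ with $T=T(x_0,u)\in(0,+\infty]$ and $|x(t)|\to\infty$ as $t\to T^-$ when $T<+\infty$; uniqueness of the maximal solution follows because any two $\XX$-solutions agree on their common interval of definition by the step-wise uniqueness. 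Everything above descends to the quotient space $\XX^\infty$: two representatives of the same class coincide a.e.\ on $[-\thetamax,0)$ \emph{and} at $0$, so neither $g$ nor the initial value $x_0(0)$, hence nor the solution, depends on the chosen representative --- which is precisely why the definition of $\XX^\infty$ retains the pointwise value at $0$.

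It then remains to verify that $(\textnormal{\ref{syst}},\XX,\UU)$ meets the axioms of a control system in the sense of \cite{MiP20}, in the weaker form allowed by the statement. The input space $\UU=L^\infty_{\mathrm{loc}}((0,+\infty),\R^m)$ is closed under time-shifts $u\mapsto u(\tau+\cdot)$ and concatenation. Writing $x_t(x_0,u):=(x_0\diamond x)(t+\cdot)$, the identity axiom holds since $x_0(x_0,u)$ equals $x_0$ a.e.\ on $[-\thetamax,0)$ and at $0$, hence equals $x_0$ in $\XX$; causality is immediate from the integral form of the equation, $x|_{[0,t]}$ depending on $u$ only through $u|_{(0,t)}$. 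The cocycle property, $x_{t+\tau}(x_0,u)=x_t(x_\tau(x_0,u),u(\tau+\cdot))$ as an identity in $\XX$, follows from uniqueness: the map $t\mapsto x(t;x_\tau(x_0,u),u(\tau+\cdot))$ solves \eqref{syst} with initial state $x_\tau(x_0,u)$ and input $u(\tau+\cdot)$, and so does $t\mapsto x(t+\tau;x_0,u)$ --- because the $\tau$-shift of $x_0\diamond x$ agrees (a.e.\ on the negative part and at the junction point $0$) with $x_\tau(x_0,u)\diamond x(\tau+\cdot\,;x_0,u)$ --- so the two coincide, whence also the corresponding history segments. Finally, $t\mapsto x(t;x_0,u)$ is $\R^n$-valued continuous in both cases by Remark \ref{rem:cont}, and $t\mapsto x_t(x_0,u)$ is continuous in the norm of $\XX^0$; for $\XX^\infty$ this last continuity generally fails, the time-shift semigroup not being strongly continuous on $L^\infty$ --- which is exactly the exception recorded in the statement.

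The analysis here should be essentially routine --- Carath\'eodory theory after freezing the delays --- so the bulk of the work is bookkeeping. The two delicate points are: (i) checking that each construction is well defined on the quotient $\XX^\infty$, which is guaranteed by its very definition; and (ii) verifying the cocycle property when, for small $\tau$, the history segment $x_\tau(x_0,u)$ is a genuine mixture of the merely essentially bounded datum $x_0$ and the continuous solution $x$, so that the concatenation operator $\diamond$ must be tracked carefully through the time shift. I expect (ii) to be the main obstacle.
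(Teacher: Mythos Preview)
Your approach is correct and essentially the same as the paper's: for $\XX^0$ the paper simply cites standard references, and for $\XX^\infty$ it points to Lemma~\ref{lem:x_z} (the reduction of \eqref{syst} on a short interval to \eqref{syst_input} by freezing the delayed terms), which is precisely the method of steps you carry out explicitly via Carath\'eodory theory. You fill in considerably more detail than the paper's very terse proof---in particular you spell out the well-definedness on the quotient $\XX^\infty$ and verify the control-system axioms of \cite{MiP20}, which the paper leaves implicit---but the underlying strategy is identical.
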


In other words, $x_t(x_0, u)$ is the flow of the control system, taking values in a functional state space, while $x(t; x_0, u)$ gives the value of the solution at each time $t$, i.e., is the evaluation of $x_t(x_0, u)(0)$. 







\begin{proof}
For $\XX=\XX^0$, see, e.g., \cite[Part II, Chapter 4.3.1]{bensoussan2007representation}.
For $\XX=\XX^\infty$, the result follows \cite{DELFOUR1972213} by remarking that \cite[Proposition 4.3]{DELFOUR1972213}
holds for $p=\infty$ for systems in the form of \eqref{syst} (i.e. with a finite number of delays). Alternatively, it follows from Lemma \ref{lem:x_z} (proved later in the paper),
with $T=\min(\theta_1, \min_{\substack{k, j\in\{1, \dots, p\}\\ k\neq j}}|\theta_k-\theta_j|)$.
\end{proof}
\begin{remark}\label{rem:flow}
For initial conditions $x_0$ in $\XX^\infty$, note that
the flow map $t\mapsto x_t(x_0, u)$ is not necessarily continuous (continuity of the flow is not required in Definition \ref{def:sol}). For example, since the shift operator is not continuous on $L^\infty(\R, \R^n)$, the flow
associated
to $\dot x = 0$ is not continuous at the initial condition 
\[
x_0:s\mapsto\begin{cases}
    0 &\text{if } -\theta_p\leq s\leq -\theta_p/2
    \\
    1 &\text{if } -\theta_p/2< s\leq 0
\end{cases}.
\]
However, since solutions are continuous from $t=0$ (see Remark \ref{rem:cont}), the flow is continuous over $[\theta_p, +\infty)$.
\end{remark}

We now extend several properties of time-delay systems that are usually defined only for $\XX^0$-solutions \cite{CKP23}. We start with forward completeness.





\begin{definition}
    Let $\XX$ be either $\XX^0$ or $\XX^\infty$.
    We say that $(\textnormal{\ref{syst}},\XX,\UU)$ is \emph{forward complete (FC)} if for all $x_0\in\XX$ and all $u\in \UU$, the corresponding solution $x(\cdot,x_0,u)$ of $(\textnormal{\ref{syst}},\XX,\UU)$ exists on $[0, +\infty)$.
\end{definition}



We may additionally request that, over any bounded time interval, the solutions generated from a bounded set of initial states, with inputs taking values in any given bounded set, cover only a bounded subset of the state space. This corresponds to the BRS property, also sometimes referred to as robust forward completeness in the literature \cite{karafyllis2022global, CKP23, MaH23}.


\begin{definition}
    Let $\XX$ be either $\XX^0$ or $\XX^\infty$.
    We say that $(\mathrm{\ref{syst}},\XX,\UU)$ has \emph{bounded reachability sets (is BRS)}
    if
    it is \textnormal{FC} and for any $r>0$, the set
    \begin{align*}
    \{x(t; x_0, u)\mid
    &\ t\in[0, r],\quad
    x_0\in\XX, \quad u\in\UU,
    \\
    &\qquad \|x_0\|_\XX\leq r,\quad  \|u\|_{L^\infty}\leq r \}
    \end{align*}
    is bounded.
\end{definition}

\begin{remark}
    By causality, one can equivalently write $\|u|_{[0, r]}\|_{L^\infty}\leq r$ instead of $\|u\|_{L^\infty}\leq r$ in the above condition without changing the definition of \textnormal{BRS}.
\end{remark}

An equivalent characterization of BRS is given by the following lemma, which requires only FC and bounded reachable sets on a (possibly short) time interval.
\begin{lemma}\label{lem:relax_brs}
$\prop{\ref{syst}}{\XX}{\UU}{BRS}$ if and only if it is \textnormal{FC} and there exists $T>0$ such that for all $r>0$ the following set is bounded:
\begin{align*}
    \{x(t; x_0, u)\mid
    &\ t\in[0, T], \quad 
    x_0\in\XX, \quad u\in\UU,
    \\
    &\qquad \|x_0\|_\XX\leq r, \quad  \|u\|_{L^\infty}\leq r \}.
    \end{align*}
\end{lemma}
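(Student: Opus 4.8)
The implication from left to right is immediate: if $(\textnormal{\ref{syst}},\XX,\UU)$ is \textnormal{BRS}, then for any fixed $T>0$ the set appearing in the statement involves only times in $[0,T]\subseteq[0,\max(r,T)]$ and data of norm at most $r\le\max(r,T)$, so it is contained in the reachable set (for radius $\max(r,T)$) that \textnormal{BRS} already declares bounded. The content is therefore in the converse, and the plan is a \emph{method-of-steps} argument: starting from FC together with a uniform bound on reachable sets over the short horizon $[0,T]$, I would propagate that bound to $[0,2T]$, $[0,3T]$, and so on, by restarting the system at the instants $T,2T,\dots$ and using the cocycle property of the control system $(\textnormal{\ref{syst}},\XX,\UU)$ granted by Theorem~\ref{thm-existence}, namely $x(kT+s;x_0,u)=x\big(s;x_{kT}(x_0,u),u(kT+\cdot)\big)$.

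Concretely, I would first record that
\[
M(r):=\sup\{|x(t;x_0,u)| : t\in[0,T],\ \|x_0\|_\XX\le r,\ \|u\|_{L^\infty}\le r\}
\]
is finite by hypothesis, nondecreasing in $r$, and satisfies $M(r)\ge r$ (take $t=0$ and an initial condition with $|x_0(0)|=r$). Writing $M^{(k)}$ for the $k$-fold composite of $M$ (with $M^{(0)}=\mathrm{Id}$), one has $M^{(k)}(r)\ge r$ and $M^{(k)}(r)$ nondecreasing in $k$. I would then prove by induction on $k$ that for every pair $(x_0,u)$ with $\|x_0\|_\XX\le r$ and $\|u\|_{L^\infty}\le r$ one has $\|x_{kT}(x_0,u)\|_\XX\le M^{(k)}(r)$, and consequently $|x(t;x_0,u)|\le M^{(k+1)}(r)$ for all $t\in[kT,(k+1)T]$. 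The base case $k=0$ is $\|x_0\|_\XX\le r=M^{(0)}(r)$. For the inductive step, the restarted problem at time $kT$ has initial state of $\XX$-norm at most $M^{(k)}(r)$ and input $u(kT+\cdot)$ of $L^\infty$-norm at most $r\le M^{(k)}(r)$ (translation does not increase the $L^\infty$-norm), so the definition of $M$ applied with radius $M^{(k)}(r)$ gives $|x(kT+s;x_0,u)|\le M^{(k+1)}(r)$ for $s\in[0,T]$, which is the trajectory bound. To bound the \emph{state} $x_{(k+1)T}(x_0,u)$, a function on $[-\thetamax,0]$, I would split its argument $\theta$ into the part with $T+\theta\ge 0$, where its value is a trajectory value over $[0,T]$ of the restarted problem and is hence $\le M^{(k+1)}(r)$, and the part with $T+\theta<0$ (which occurs only if $T<\thetamax$), where its value coincides with a value of $x_{kT}(x_0,u)$ itself and is hence $\le\|x_{kT}(x_0,u)\|_{L^\infty}\le M^{(k)}(r)\le M^{(k+1)}(r)$; this yields $\|x_{(k+1)T}(x_0,u)\|_\XX\le M^{(k+1)}(r)$ and closes the induction. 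Finally, given $r>0$, choosing $N:=\lceil r/T\rceil$ covers $[0,r]$ by $\bigcup_{k=0}^{N-1}[kT,(k+1)T]$ and gives the uniform bound $|x(t;x_0,u)|\le M^{(N)}(r)$ on $[0,r]$, which is precisely \textnormal{BRS}.

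The main difficulty I anticipate is bookkeeping rather than conceptual: one must carefully relate the $L^\infty$-norm of each reinitialized state to pointwise trajectory bounds, and in particular handle the regime $T<\thetamax$, where the history window of $x_{(k+1)T}(x_0,u)$ reaches back before time $kT$ and must be controlled by the previously established state bound rather than by $M$. A secondary point to be careful about is that the cocycle identity is indeed available here even though the flow $t\mapsto x_t(x_0,u)$ need not be continuous on $\XX$ (Remark~\ref{rem:flow}); this is legitimate because Theorem~\ref{thm-existence} asserts that $(\textnormal{\ref{syst}},\XX,\UU)$ is a control system in the sense of \cite{MiP20} apart from that continuity requirement, so causality and the cocycle property do hold.
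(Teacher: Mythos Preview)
Your proposal is correct and follows essentially the same route as the paper: an induction (method of steps) along multiples of $T$ using the cocycle property to propagate the short-horizon bound, defining iteratively a radius that dominates both the trajectory values and the history; the paper phrases this via $\rho_{n,r}:=\max(r,\sup S_{n,r})$ and the inclusion $S_{n+1,r}\subset S_{n,r}\cup S_{1,\rho_{n,r}}$, while you phrase it via the iterates $M^{(k)}$, but the content is the same. Your explicit treatment of the case $T<\thetamax$ (splitting the window of $x_{(k+1)T}$ into a trajectory part and a part read off $x_{kT}$) is a point the paper leaves implicit in its definition of $\rho_{n,r}$.
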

\begin{proof}
We proceed by induction. Assume that for some $n\in\N_{>0}$ and all $r>0$, the set
\begin{align*}
    S_{n,r}=\{x(t; x_0, u)\mid
    &\ t\in[0, nT], 
    x_0\in\XX, u\in\UU,
    \\
    &\|x_0\|_\XX\leq r,  \|u\|_{L^\infty}\leq r \}
    \end{align*}
    is bounded. Define
    $\rho_{n, r} = \max(r, \sup_{\xi\in S_{n,r}} |\xi|)$.

By the cocycle property of the flow,
$x(t+T; x_0, u) = x(T; x_t(x_0, u), u(t+\cdot))$ for all $t,T>0$, all $x_0\in\XX$ and all $u\in\UU$.
Hence, for all $r>0$,
\begin{align*}
&S_{n+1,r}
\\
&\begin{aligned}
    \subset S_{n,r} \cup
    \{x(t; x_0, u)\mid
    &\, t\in[nT, (n+1)T], 
    x_0\in\XX, u\in\UU,
    \\
    &\|x_0\|_\XX\leq r,  \|u\|_{L^\infty}\leq r \}
\end{aligned}
\\
&\begin{aligned}
    \subset S_{n,r} \cup
    \{&x(t; x_{nT}(x_0, u), u(nT+\cdot))\mid
    \\
    &\ t\in[0, T], 
    x_0\in\XX, u\in\UU,
    \\
    &\ \|x_0\|_\XX\leq r,  \|u\|_{L^\infty}\leq r \}
\end{aligned}
\\
&\begin{aligned}
    \subset S_{n,r} \cup
    \{&x(t; x_0, u)\mid
    \\
    &\ t\in[0, T], 
    x_0\in\XX, u\in\UU,
    \\
    &\ \|x_0\|_\XX\leq \rho_{n, r},  \|u\|_{L^\infty}\leq r \}
\end{aligned}
\\
&\begin{aligned}
    \subset S_{n,r} \cup
    S_{1, \rho_{n, r}}.
\end{aligned}
\end{align*}
Hence, if $S_{1,r}$ is bounded for all $r>0$, so is $S_{n,r}$ for all $n\in\N_{>0}$ and all $r>0$, thus the system is BRS.
\end{proof}

To investigate the relation between FC and BRS for the state spaces $\XX^0$ and $\XX^\infty$, we consider the following ODE with inputs associated to \eqref{syst}:
\begin{equation}\label{syst_input}
\tag{ODE}
\dot z(t) = f(z(t), (v_k(t))_{k\in\{1,\dots,p\}}, u(t)),
\end{equation}
where $v = (v_k)_{k\in\{1,\dots,p\}}$ and $u$ are inputs.
Define $$\VV^0 = C^0(\R_+, (\R^n)^p)\quad \text{and} \quad \VV^\infty=L^\infty_\textrm{loc}(\R_+, (\R^n)^p),$$
and assume that $v\in\VV$ where $\VV$ is either $\VV^0$ or $\VV^\infty$.
Recall that $f$ is continuous and locally Lipschitz continuous with respect to its first variables (in $\R^{n}$), uniformly with respect to the last variables (in $ (\R^n)^{p}\times \R^m$). Hence, according to the Cauchy-Lipschitz theorem,
the Cauchy problem associated to system \eqref{syst_input} with an initial condition $z_0\in\R^n$ and an input $(v, u)\in \VV\times \UU$ admits a unique maximal solution $z(\cdot; z_0, v, u)\in C^0([0, T), \R^n)$ for some $T\in\R_{>0}\cup\{+\infty\}$.
We denote this control system $(\textnormal{\ref{syst_input}}, \R^n, \VV\times\UU)$.
We recall the definition of FC and BRS for this system, and distinguish between two different classes of inputs.



\begin{definition}
Let $\VV$ be either $\VV^0$ or $\VV^\infty$.
We say that $(\textnormal{\ref{syst_input}}, \R^n, \VV\times\UU)$
is \emph{forward complete (FC)} if for all  $z_0\in\R^n$, all $v\in\VV$, and all $u\in \UU$, the corresponding maximal solution of $(\textnormal{\ref{syst_input}}, \R^n, \VV\times\UU)$ exists on $[0, +\infty)$.
\end{definition}

\begin{definition}
    Let $\VV$ be either $\VV^0$ or $\VV^\infty$.
    We say that $(\textnormal{\ref{syst_input}}, \R^n, \VV\times\UU)$ has the \emph{bounded reachability sets property (is BRS)} if
    it is \textnormal{FC} and for any $r>0$, the set
    \begin{align*}
    \{z(t; z_0, v, u)\mid
    &\ t\in[0, r], 
    z_0\in\R^n, v\in\VV, u\in\UU,
    \\
    &|z_0|\leq r, 
    \|v\|_{L^\infty}\leq r,
    \|u\|_{L^\infty}\leq r \}
    \end{align*}
    is bounded.
\end{definition}


Our motivation for introducing system \eqref{syst_input} is the following lemma that relates solutions of \eqref{syst} with those of \eqref{syst_input}. Our investigation of FC and BRS of \eqref{syst} relies on the study of FC and BRS of \eqref{syst_input}, as was already done in \cite[Theorem 9]{MaH23} for the state space $\mathcal X^0$ and input space $\VV^\infty$.

\begin{lemma}\label{lem:x_z}
    Given $i\in\{0, \infty\}$, let $\XX = \XX^i$ and $\VV = \VV^i$.
    Let $z_0\in\R^n$, $u\in\UU$, and $T\in\R_{>0}\cup\{+\infty\}$.
    Let $x\in C^0([0, T), \R^n)$ be such that $x(0) = z_0$.
    Then:
    \begin{itemize}
        \item
    if $x$ is a solution to $(\textnormal{\ref{syst}}, \XX, \UU)$ for some initial condition $x_0\in\XX$ and some $u\in\UU$, then $x$ is also a solution to $(\textnormal{\ref{syst_input}}, \R^n, \VV\times\UU)$ with inputs $u$ and $v = ((x_0\diamond x)(\cdot-\theta_k))_{k\in\{1,\dots,p\}}\in\VV$.
    \item conversely, if $(v,u)\in\VV\times\UU$
and $x$ is a corresponding solution to $(\textnormal{\ref{syst_input}}, \R^n, \VV\times\UU)$,
then for any $\delta\in(0, \min(T, \theta_1, \min_{\substack{k, j\in\{1, \dots, p\}\\ k\neq j}}|\theta_k-\theta_j|))$, 
and any $x_0\in\XX$ such that
$x_0|_{[-\theta_k, -\theta_k+\delta]}=v_k(\cdot+\theta_k)$ for all $k\in\{1,\dots,p\}$,
$x|_{[0, \delta)}$ is a solution to $(\textnormal{\ref{syst}}, \XX, \UU)$
with initial condition $x_0\in\XX$ and input $u$.
\end{itemize}
\end{lemma}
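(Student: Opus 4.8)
The plan is to verify both implications directly from Definition~\ref{def:sol}; the only real work is to keep track of the domains on which each signal lives and to check that the relevant regularity---continuity when $i=0$, local essential boundedness when $i=\infty$---is preserved along the way.

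\emph{Forward implication.} Suppose $x$ is an $\XX$-solution of \eqref{syst} on $[0,T)$ with initial condition $x_0$ and input $u$, and set $v_k:=(x_0\diamond x)(\cdot-\theta_k)$. First I would record that $x_0\diamond x$ inherits the regularity of its two pieces: it is continuous on $[-\theta_p,T)$ when $x_0\in\XX^0$ (here one uses $x(0)=x_0(0)$), and it is locally essentially bounded on $[-\theta_p,T)$ when $x_0\in\XX^\infty$. Since $\theta_k\leq\theta_p$, shifting by $\theta_k$ keeps the domain of $v_k$ containing $[0,T)$, so $v=(v_k)_k$ is a legitimate input for \eqref{syst_input} (with the understanding that, if $T<+\infty$, only $v|_{[0,T)}$ matters for $x$ being a solution on $[0,T)$; in the intended applications $T=+\infty$). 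With this $v$, the identity $\dot x(t)=f(x(t),((x_0\diamond x)(t-\theta_k))_k,u(t))$ holding a.e.\ on $(0,T)$ is verbatim the defining identity of an $(\textnormal{\ref{syst_input}},\R^n,\VV\times\UU)$-solution with inputs $v,u$ and initial value $z_0=x_0(0)$; absolute continuity on compact subintervals and the initial condition transfer unchanged.

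\emph{Converse implication.} Now let $(v,u)\in\VV\times\UU$, let $x$ be a corresponding solution of \eqref{syst_input} on $[0,T)$ with $x(0)=z_0$, fix $\delta$ in the stated range, and choose $x_0\in\XX$ with $x_0|_{[-\theta_k,-\theta_k+\delta]}=v_k(\cdot+\theta_k)$ for every $k$ and $x_0(0)=z_0$. The choice of $\delta$ is exactly what makes this selection legitimate: since $\delta<\min_{k\neq j}|\theta_k-\theta_j|$ the intervals $[-\theta_k,-\theta_k+\delta]$ are pairwise disjoint, and since $\delta<\theta_1\leq\theta_k$ each of them lies in $[-\theta_p,0)$ and in particular misses $0$, so the prescribed constraints are mutually consistent and compatible with $x_0(0)=z_0$; one then extends $x_0$ arbitrarily (continuously, resp.\ by $0$ a.e.) over the rest of $[-\theta_p,0]$, and for $i=\infty$ all these equalities are read almost everywhere. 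The key computation is then immediate: for a.a.\ $t\in(0,\delta)$ and each $k$ one has $t-\theta_k\in(-\theta_k,-\theta_k+\delta)\subset[-\theta_p,0)$, hence by definition of $\diamond$,
\[
(x_0\diamond x)(t-\theta_k)=x_0(t-\theta_k)=v_k\big((t-\theta_k)+\theta_k\big)=v_k(t).
\]
Plugging this into the ODE identity $\dot x(t)=f(x(t),(v_k(t))_k,u(t))$, which is valid a.e.\ on $(0,T)\supset(0,\delta)$, turns it into the \eqref{syst} identity for $x|_{[0,\delta)}$ with initial data $x_0$; together with $x(0)=z_0=x_0(0)$ and absolute continuity on compact subintervals of $[0,\delta)$, this is precisely the statement that $x|_{[0,\delta)}$ is an $\XX$-solution of \eqref{syst}.

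\emph{Main obstacle.} There is no conceptual difficulty; the care is entirely in the bookkeeping---(i) confirming that $x_0\diamond x$ genuinely inherits continuity / local essential boundedness so that $v\in\VV$, and (ii) exploiting the two strict inequalities $\delta<\theta_1$ and $\delta<\min_{k\neq j}|\theta_k-\theta_j|$ to keep the constraints defining $x_0$ consistent and to guarantee that the shifted arguments $t-\theta_k$ land exactly where $x_0\diamond x$ coincides with $x_0$. For $\XX^\infty$ one must also stay mindful throughout that the pointwise-looking identities are meant modulo null sets, consistently with the definitions of $\diamond$ and of $\XX^\infty$.
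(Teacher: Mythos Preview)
Your proposal is correct and follows essentially the same approach as the paper: both directions are checked by direct substitution into Definition~\ref{def:sol}, using that the strict inequalities on $\delta$ make the intervals $[-\theta_k,-\theta_k+\delta]$ pairwise disjoint and disjoint from $\{0\}$, so that $(x_0\diamond x)(t-\theta_k)=x_0(t-\theta_k)=v_k(t)$ for a.a.\ $t\in(0,\delta)$. If anything, your write-up is more careful than the paper's in explicitly verifying that $v\in\VV$ inherits the correct regularity and in spelling out the construction of $x_0$.
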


\begin{proof}
On the one hand,
assume that $x$ is a solution to \eqref{syst} and define $v = ((x_0\diamond x)(\cdot-\theta_k))_{k\in\{1,\dots,p\}}\in\VV$.
Then it holds for almost all $t\in[0, \theta_1)$ that
\begin{align*}
    \dot x(t)
    &= f\Big(x(t), \big((x_0\diamond x)(t-\theta_k)\big)_{k\in\{1,\dots,p\}}, u(t)\Big)
    \\
    &=f(x(t), v(t), u(t)),
\end{align*}
meaning that $x$ is a solution to \eqref{syst_input} with inputs $v$ and $u$.
On the other hand, assume that $v\in\VV$ satisfies the assumptions of the lemma.
The choice of $\delta$ is made to avoid conflicting definitions of $v$.
Then it holds for almost all $t\in[0, \delta)$ that
\begin{align*}
    \dot x(t)
    &= f(x(t), v(t), u(t))
    \\
    &=f\Big(x(t), \big((x_0\diamond x)(t-\theta_k)\big)_{k\in\{1,\dots,p\}}, u(t)\Big)
    \\
    &=f\Big(x(t), \big(x_0(t-\theta_k)\big)_{k\in\{1,\dots,p\}}, u(t)\Big).
\end{align*}
Hence, $x|_{[0, \delta)}$ is a solution to \eqref{syst} with initial condition $x_0$ and input $u$.
\end{proof}

\section{Main result}
\label{sec:main}

Our main result is the following relations between FC and BRS for systems \eqref{syst} and \eqref{syst_input}.

\begin{theorem}\label{th:main}
    All the following statements are equivalent:
    \begin{enumerate}[label = (\roman*)]
        \item \hypertarget{xinffc}{$\prop{\ref{syst}}{\XX^\infty}{\UU}{FC}$}
        \item \hypertarget{xinfbrs}{$\prop{\ref{syst}}{\XX^\infty}{\UU}{BRS}$}
        \item \hypertarget{x0brs}{$\prop{\ref{syst}}{\XX^0}{\UU}{BRS}$}
        \item \hypertarget{zinffc}{$\prop{\ref{syst_input}}{\R^n}{\VV^\infty\times\UU}{FC}$}
        \item \hypertarget{zinfbrs}{$\prop{\ref{syst_input}}{\R^n}{\VV^\infty\times\UU}{BRS}$}
        \item \hypertarget{z0brs}{$\prop{\ref{syst_input}}{\R^n}{\VV^0\times\UU}{BRS}$}
    \end{enumerate}
    Moreover, the following statements are equivalent, implied by the above, and do not imply the above:
    \begin{enumerate}[label = (\roman*), resume]
        \item \hypertarget{x0fc}{$\prop{\ref{syst}}{\XX^0}{\UU}{FC}$}
        \item \hypertarget{z0fc}{$\prop{\ref{syst_input}}{\R^n}{\VV^0\times\UU}{FC}$}
    \end{enumerate}
\end{theorem}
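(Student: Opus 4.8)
\emph{Strategy.} I would reduce the whole statement to the ``delay-free block'' $\{(\text{iv}),(\text{v}),(\text{vi})\}$, pass between \eqref{syst} and \eqref{syst_input} through Lemma~\ref{lem:x_z}, and use Lemma~\ref{lem:relax_brs} to replace \textnormal{BRS} by boundedness of reachable sets on a short time horizon. Several implications are immediate: since $\XX^0\hookrightarrow\XX^\infty$ isometrically and an $\XX^0$-solution is also an $\XX^\infty$-solution, and since $\VV^0\subset\VV^\infty$, one gets (ii)$\Rightarrow$(i), (v)$\Rightarrow$(iv), (ii)$\Rightarrow$(iii), (v)$\Rightarrow$(vi), (i)$\Rightarrow$(vii) and (iv)$\Rightarrow$(viii) for free. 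It then remains to prove (iv)$\Leftrightarrow$(v), (vi)$\Rightarrow$(iv), (iv)$\Rightarrow$(i), (v)$\Rightarrow$(ii), (iii)$\Rightarrow$(vi), (vii)$\Leftrightarrow$(viii), and the single delicate arrow (i)$\Rightarrow$(iv); chaining these gives all the equivalences, and (vii)$\not\Rightarrow$(i) is the counterexample of \cite{MaH23}.

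\emph{Delay-free block.} The equivalence (iv)$\Leftrightarrow$(v) is \cite{LSW96} applied to \eqref{syst_input} viewed as an ODE with state $z$ and measurable, locally essentially bounded input $(v,u)$, which is legitimate because $f$ is locally Lipschitz in $z$ uniformly in $(v,u)$. For (vi)$\Rightarrow$(iv), fix $z_0$, $v\in\VV^\infty$, $u$ and $N>0$; by Lusin's theorem pick continuous $v_j$ with $v_j\to v$ a.e.\ on $[0,N]$ and $\|v_j\|_{L^\infty}\le\|v\|_{L^\infty}$, extended constantly beyond $N$ so that $v_j\in\VV^0$. By (vi) the solutions $z(\cdot;z_0,v_j,u)$ are defined and uniformly bounded on $[0,N]$, hence equicontinuous, so Arzel\`a--Ascoli together with dominated convergence in the integral form of \eqref{syst_input} yields a solution with input $v$ on $[0,N]$; since $N$ is arbitrary, (iv) follows.

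\emph{Routine links.} For (iv)$\Rightarrow$(i): a maximal $\XX^\infty$-solution $x$ on $[0,T)$ equals, by the first item of Lemma~\ref{lem:x_z}, the \eqref{syst_input}-solution with the specific input $v=\bigl((x_0\diamond x)(\,\cdot-\theta_k)\bigr)_k$; when $T<\infty$ this $v$ is bounded on $[0,T)$ (its arguments lie below $0$ or in the compact set $[0,T-\thetamin]\subset[0,T)$), so (iv) forces $x$ to stay bounded near $T$ and hence to be continuable, contradicting maximality, so $T=+\infty$. The same identification, together with Lemma~\ref{lem:relax_brs} at horizon $\thetamin$ and the elementary bound $\bigl\|\bigl((x_0\diamond x)(\,\cdot-\theta_k)\bigr)_k\bigr\|_{L^\infty([0,\thetamin])}\le\|x_0\|_{\XX^\infty}$ (valid since $t-\theta_k\le0$ for $t\le\thetamin$), upgrades (v) to (ii). Next, (vii)$\Leftrightarrow$(viii) follows from the two items of Lemma~\ref{lem:x_z}: they match $\XX^0$-solutions with $\VV^0$-input \eqref{syst_input}-solutions on short windows, finite-time blow-up of one would force it on the other by continuation, and since the delayed inputs occurring here are continuous, the associated short-window histories depend continuously (in $\XX^0$) on the window length, so the identifications patch together to global existence. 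Finally (iii)$\Rightarrow$(vi): (iii) gives (vii), hence (viii), so \eqref{syst_input} is \textnormal{FC} on $\VV^0\times\UU$; and for the reachable-set part, the second item of Lemma~\ref{lem:x_z} realizes each short-time $\VV^0$-input \eqref{syst_input}-solution with $|z_0|,\|v\|_{L^\infty},\|u\|_{L^\infty}\le r$ as an $\XX^0$-solution with $\|x_0\|_{\XX^0}\le r$ (the history being built from the continuous $v$ and interpolated to stay in the ball of radius $r$), which (iii) bounds; the argument of Lemma~\ref{lem:relax_brs} then applies, mutatis mutandis, to \eqref{syst_input}. Chaining, (i)--(vi) are equivalent as soon as (i)$\Rightarrow$(iv) is available; and the example of \cite{MaH23}, being \textnormal{FC} but not \textnormal{BRS} on $\XX^0$, satisfies (vii) but not (iii), so (vii)$\not\Rightarrow$(i).

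\emph{The delicate arrow and the main obstacle.} To prove (i)$\Rightarrow$(iv), set $\delta_0=\tfrac12\min\bigl(\thetamin,\min_{k\ne j}|\theta_k-\theta_j|\bigr)$ and suppose, for contradiction, that a maximal \eqref{syst_input}-solution $z(\cdot;z_0,v,u)$ has finite lifespan $T_z$; after a time shift we may assume $T_z\le\delta_0$. For each $\delta'\in(0,T_z)$, the second item of Lemma~\ref{lem:x_z} provides $x_0^{(\delta')}\in\XX^\infty$ with $\|x_0^{(\delta')}\|_{\XX^\infty}\le\max\bigl(|z_0|,\|v\|_{L^\infty([0,\delta_0])}\bigr)$ whose $\XX^\infty$-solution agrees with $z$ on $[0,\delta')$; by (i) this solution is global, hence bounded on $[0,\delta_0]$. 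The key is to extract from this a bound on $z$ over $[0,T_z)$ that is \emph{uniform in} $\delta'$: that contradicts maximality of $T_z$ and yields $T_z=+\infty$, hence (iv). This uniform bound is the main obstacle. The histories $x_0^{(\delta')}$ differ only on intervals of length at most $T_z-\delta'$, so they do \emph{not} converge in $L^\infty$, and by Remark~\ref{rem:flow} the flow of \eqref{syst} on $\XX^\infty$ is not continuous in the initial state, so continuity of the flow cannot be invoked. The way out is that on $[0,\delta_0]$ the corresponding solutions solve ODEs whose right-hand sides differ only through an integral of a uniformly bounded integrand over a set of vanishing measure, so a Gr\"onwall estimate in this $L^1$ sense---using that the limiting $\XX^\infty$-solution exists on $[0,\delta_0]$ by (i)---still forces uniform convergence of the solutions, hence the bound. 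This $L^1$-type continuous dependence, available precisely because on $\XX^\infty=L^\infty\times\R^n$ the delayed terms act as measurable inputs for which \cite{LSW96} applies, is exactly what is missing on $\XX^0$, where only continuous delayed inputs occur and the obstruction of \cite{MaH23} persists.
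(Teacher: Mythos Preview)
Your overall decomposition is sound and several of your arrows are cleaner or more self-contained than the paper's (e.g.\ your direct proof of (vi)$\Rightarrow$(iv) via Lusin approximation and Arzel\`a--Ascoli avoids the citation of \cite[Theorem~9]{MaH23} that the paper uses to close that loop). The implications (iv)$\Rightarrow$(i), (v)$\Rightarrow$(ii), (iii)$\Rightarrow$(vi) and (vii)$\Leftrightarrow$(viii) are all handled correctly through Lemma~\ref{lem:x_z}, and your chain of equivalences closes once (i)$\Rightarrow$(iv) is in place.

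The one place you go astray is precisely your ``delicate arrow'' (i)$\Rightarrow$(iv): the obstacle you identify is self-inflicted. You introduce a \emph{family} $x_0^{(\delta')}$ indexed by $\delta'\in(0,T_z)$ and then worry that these do not converge in $\XX^\infty$, forcing the $L^1$-type Gr\"onwall workaround. But the second item of Lemma~\ref{lem:x_z} says ``for any $x_0$ such that $x_0|_{[-\theta_k,-\theta_k+\delta]}=v_k(\cdot+\theta_k)$'': nothing prevents you from choosing a \emph{single} $x_0^\ast\in\XX^\infty$ that matches $v_k(\cdot+\theta_k)$ on the full intervals $[-\theta_k,-\theta_k+\delta_0]$ (and equals $z_0$ at $0$). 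This same $x_0^\ast$ then satisfies the hypothesis of the lemma for every $\delta'\in(0,T_z)$ simultaneously, so $z|_{[0,\delta')}$ coincides with the unique global $\XX^\infty$-solution $x(\cdot;x_0^\ast,u)$ on $[0,\delta')$ for all $\delta'<T_z$, hence $z=x(\cdot;x_0^\ast,u)$ on $[0,T_z)$. Since $x(\cdot;x_0^\ast,u)$ is continuous at $T_z$ by (i), $z$ is bounded near $T_z$, contradicting maximality. No family, no limit, no Gr\"onwall.

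This is exactly what the paper does, phrased through the first item of Lemma~\ref{lem:x_z} rather than the second: it builds one history $x_0$ (after shifting time so that blow-up occurs within $\delta$), invokes (i) to get a global \eqref{syst}-solution, observes that on $[0,\delta]$ the delayed arguments of that solution reproduce the original input $v$, and concludes by uniqueness of \eqref{syst_input}-solutions. Your Gr\"onwall sketch is probably salvageable, but as written it is vague (what exactly is ``the limiting $\XX^\infty$-solution'' if not the $x(\cdot;x_0^\ast,u)$ above?) and, more importantly, unnecessary. The genuine content of (i)$\Rightarrow$(iv), and the reason it fails for $\XX^0$, is simply that an arbitrary $v\in\VV^\infty$ can be embedded as the delayed part of a history in $\XX^\infty$ but not in $\XX^0$; once that embedding is made, one initial condition suffices.
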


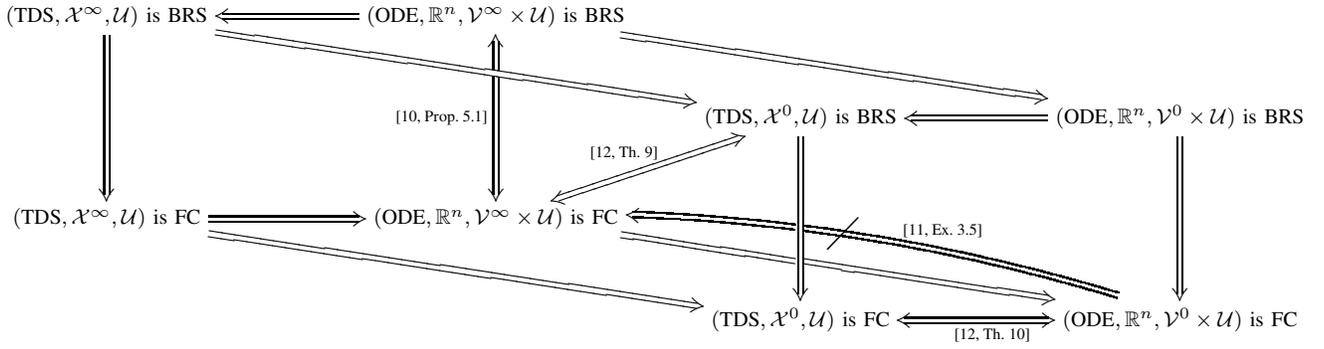
\begin{figure*}[!ht]
\centering
\footnotesize
\begin{equation*}
\xymatrix{
\prop{\ref{syst}}{\XX^\infty}{\UU}{BRS} \ar@{=>}[dd] \ar@{=>}[rrrd]
&&
\prop{\ref{syst_input}}{\R^n}{\VV^\infty\times\UU}{BRS}
\ar@{<=>}|(0.283){\text{\phantom{x}}}[dd]_{\hspace{-0.25em}\text{\cite[Prop. 5.1]{LSW96}}} \ar@{=>}[rrrd]
\ar@[blue]@{=>}[ll]
\\
&&& \prop{\ref{syst}}{\XX^0}{\UU}{BRS} \ar@{=>}[dd]
\ar@{<=>}_<<<<<<<<<<<<<{\text{\cite[Th. 9]{MaH23}}\hspace{1.3em}}[ld]
&&
\prop{\ref{syst_input}}{\R^n}{\VV^0\times\UU}{BRS} \ar@{=>}[dd]
\ar@[blue]@{=>}[ll]
\\
\prop{\ref{syst}}{\XX^\infty}{\UU}{FC} \ar@[blue]@{=>}[rr]
\ar@{=>}[rrrd]
&&
\prop{\ref{syst_input}}{\R^n}{\VV^\infty\times\UU}{FC} \ar@<0.3ex>@{=>}[rrrd]|(0.443){\text{\phantom{x}}}
\\
&&& \prop{\ref{syst}}{\XX^0}{\UU}{FC}\ar@{<=>}[rr]_{\text{\cite[Th. 10]{MaH23}}}
&&
\prop{\ref{syst_input}}{\R^n}{\VV^0\times\UU}{FC}
\ar@<-0.5ex>@{=>}@/_1pc/[lllu]_<<<<<<<<<<<<<{\hspace{1.5em}\text{\cite[Ex. 3.5]{MANCILLAAGUILAR20051111}}}
|(.50){\text{\Large $\diagup$}}
|(0.45){}
|(0.565){\text{\phantom{x}}}
}
\end{equation*}
\caption{Proof architecture of Theorem \ref{th:main}. Implications proved in the present article are indicated in blue. Implications known from the literature and trivial implications are indicated in black, as well as an implication known to be false ($\not\Rightarrow$). Together, these implications are sufficient to establish Theorem \ref{th:main}.
}
\label{fig}
\end{figure*}


The implications used to prove Theorem \ref{th:main} are depicted in Figure~\ref{fig}.
From the fact that $\XX^0\subset \XX^\infty$ and $\VV^0\subset \VV^\infty$, the following implications easily hold
\begin{center}
\hyperlink{xinffc}{$\xinffc$} $\Rightarrow$ \hyperlink{x0fc}{$\xzerofc$},    
\\
\hyperlink{xinfbrs}{$\xinfbrs$} $\Rightarrow$ \hyperlink{x0brs}{$\xzerobrs$},    
\\
\hyperlink{zinffc}{$\zinffc$} $\Rightarrow$ \hyperlink{z0fc}{$\zzerofc$}    
\\
\hyperlink{zinfbrs}{$\zinfbrs$} $\Rightarrow$ \hyperlink{z0brs}{$\zzerobrs$}
\end{center}


Since BRS implies FC, we also have that
\begin{center}
\hyperlink{xinfbrs}{$\xinfbrs$}
$\Rightarrow$ \hyperlink{xinffc}{$\xinffc$},
\\
\hyperlink{zinfbrs}{$\zinfbrs$}
$\Rightarrow$ \hyperlink{zinffc}{$\zinffc$},
\\
\hyperlink{x0brs}{$\xzerobrs$}
$\Rightarrow$ \hyperlink{x0fc}{$\xzerofc$},
\\
\hyperlink{z0brs}{$\zzerobrs$}
$\Rightarrow$ \hyperlink{z0fc}{$\zzerofc$}.
\end{center}

Moreover, from \cite[Theorem 9]{MaH23}, \cite[Theorem 10]{MaH23} and \cite[Proposition 5.1]{LSW96} respectively, we know that
\begin{center}  
\hyperlink{x0brs}{$\xzerobrs$} $\Leftrightarrow$ \hyperlink{zinffc}{$\zinffc$},
\\
\hyperlink{x0fc}{$\xzerofc$} $\Leftrightarrow$ \hyperlink{z0fc}{$\zzerofc$},
\\
\hyperlink{zinffc}{$\zinffc$} $\Leftrightarrow$ \hyperlink{zinfbrs}{$\zinfbrs$}.
\end{center}
According to \cite[Example 3.5]{MANCILLAAGUILAR20051111},
\begin{center}
\hyperlink{z0fc}{$\zzerofc$} $\nRightarrow$ \hyperlink{zinffc}{$\zinffc$}.
\end{center}
Combining all these previous results we see that, in order to prove Theorem \ref{th:main}, it remains to show that
\begin{itemize}
    \item \hyperlink{xinffc}{$\xinffc$} $\Rightarrow$
\hyperlink{zinffc}{$\zinffc$}
\item \hyperlink{zinfbrs}{$\zinfbrs$} $\Rightarrow$
\hyperlink{xinfbrs}{$\xinfbrs$}
\item \hyperlink{z0brs}{$\zzerobrs$} $\Rightarrow$
\hyperlink{x0brs}{$\xzerobrs$}.
\end{itemize}






\smallskip

\textit{Proof that}:
\vspace{-0.2cm}
\begin{center}
    \hyperlink{xinffc}{$\xinffc$} $\Rightarrow$
\hyperlink{zinffc}{$\zinffc$}
\\
\textit{and}
\\
\hyperlink{x0fc}{$\xzerofc$} $\Rightarrow$
\hyperlink{z0fc}{$\zzerofc$}\footnote{Although \hyperlink{x0fc}{$\xzerofc$} $\Rightarrow$
\hyperlink{z0fc}{$\zzerofc$} is already stated in \cite[Theorem 10]{MaH23}, we give a proof for the sake of completeness.}.
\end{center}
Given $i\in\{0, \infty\}$, let $\XX = \XX^i$ and $\VV = \VV^i$.
Assume that $\prop{\ref{syst}}{\XX}{\UU}{FC}$. It is known that FC corresponds to the absence of finite escape time (see e.g. \cite[Theorem 2]{CKP23}). Assume for the sake of contradiction that there exists $T\in(0, +\infty)$, $z_0\in\R^n$, $u\in\UU$ and $v\in\VV$ such that $|z(t; z_0, v, u)|\to+\infty$ as $t\to T^-$.
Set
$ \delta = \frac{1}{2}\min(T,\theta_1, \min_{\substack{k, j\in\{1, \dots, p\}\\ k\neq j}}|\theta_k-\theta_j|).   
$
Since all delays are assumed positive and distinct, it holds that $\delta>0$.
Let $t_0 = T - \delta>0$.
Note that for all $k\in\{1,\dots,p\}$,
we have that $0\notin[-\theta_k, -\theta_k+\delta]$ and  $[-\theta_k, -\theta_k+\delta]\cap [-\theta_j, -\theta_j+\delta] = \emptyset$ for all $j\in\{1,\dots,p\}$ with $k\neq j$. Then, we can choose $x_0\in\XX$ such that:
\begin{itemize}
\item $x_0(0) = z(t_0; z_0, v, u)$
\item for all $k\in\{1,\dots,p\}$ and almost all $t\in[-\theta_k, -\theta_k+\delta]$, $x_0(t) = v_k(t_0+t+\theta_k).$
\end{itemize}
Then, by assumption, the Cauchy problem \eqref{syst} initialized at $x_0$ and with input $u(t_0+\cdot)$ admits a unique solution $x(\cdot; x_0, u(t_0+\cdot))$ over $[-\thetamax,+\infty)$.
By Lemma \ref{lem:x_z},
$x(\cdot; x_0, u(t_0+\cdot))$
is also a solution to \eqref{syst_input} with input $((x_0\diamond x)(\cdot-\theta_k))_{k\in\{1,\dots,p\}}$ over $[-\thetamax,\delta]$.
In particular, for almost all $t\in[0, \theta_1]$ and all $k\in\{1,\dots,p\}$, we have
$(x_0\diamond x)(t-\theta_k) = x_0(t-\theta_k) = v(t_0+t)$.
Hence, $x(\cdot; x_0, u(t_0+\cdot))$ is a solution to \eqref{syst_input} with inputs $v(t_0+\cdot)$ and $u(t_0+\cdot)$ over $[0, \delta]$. By uniqueness of solutions to the Cauchy problem associated to \eqref{syst_input} initialized at $z(t_0; z_0, v, u)$, it holds that
$z(t_0+t; z_0, v, u) = x(t; x_0, u(t_0+\cdot))$
for all $t\in[0, \delta]$.
Since $x(\cdot; x_0, u(t_0+\cdot))$ is continuous over $[0, +\infty)$, we get that
$z(t; z_0, v, u)\to x(\delta; x_0, u(t_0+\cdot))$ as $t\to T^-$, which is a contradiction.

\smallskip

\textit{Proof that:}
\vspace{-0.2cm}
\begin{center}
\hyperlink{zinfbrs}{$\zinfbrs$} $\Rightarrow$ \hyperlink{xinfbrs}{$\xinfbrs$}
\\
\textit{and}
\\
\hyperlink{z0brs}{$\zzerobrs$} $\Rightarrow$ \hyperlink{x0brs}{$\xzerobrs$}.
\end{center}
Given $i\in\{0, \infty\}$, let $\XX = \XX^i$ and $\VV = \VV^i$.
Assume $\prop{\ref{syst_input}}{\R^n}{\VV\times\UU}{BRS}$.
We make use of Lemma \ref{lem:relax_brs} by considering $T=\theta_1/2$. Assume for the sake of contradiction that there exist $r>0$, sequences $(t_j)_{j\in\N}$ in $[0, T]$,
$(x_0^j)_{j\in\N}$ in $\XX$,
and
$(u_j)_{j\in\N}$ in $\UU$
such that 
$\|x_0^j\|_{\XX}\leq r$
and
$\|u_j|_{[0, T)}\|_{L^\infty}\leq r$
for all $j\in\N$
and that
$|x(t_j; x_0^j, u_j)|\to+\infty$ as $j\to+\infty$.
For each $j\in\N$,
define $v_j = ((x_0^j\diamond x(\cdot; x_0^j, u_j))(\cdot-\theta_k))_{k\in\{1,\dots,p\}}\in\VV$.
By Lemma \ref{lem:x_z}, $z(\cdot; x_0^j(0), v_j, u_j) = x(\cdot; x_0^j, u_j)$.
Since $T<\theta_1$, we have
$\|v_j|_{[0, T)}\|_{L^\infty}
\leq \|x_0^j\|_\XX\leq r$.
By assumption, $(z(t_j; x_0^j(0), v_j, u_j))_{j\in\N}$ is bounded.
Hence, $(x(t_j; x_0^j, u_j))_{j\in\N}$ is bounded, which is a contradiction.

\section{Relations between GAS and UGAS}
\label{sec:gas}


In this section, we specialize \eqref{syst} to systems without inputs, that is,
\begin{equation}\label{syst_noinputs}
\tag{TDS}
\dot x(t) = f(x(t), (x(t-\theta_k))_{k\in\{1,\dots,p\}}).
\end{equation}

Using our main result, we show that global asymptotic stability is necessarily uniform in the $\XX^\infty$ setting (in contrast to the $\XX^0$-case).
To formalize this, recall the following notions.


\begin{definition}
Let $\XX$ be either $\XX^0$ or $\XX^\infty$.
We say that
$(\textnormal{\ref{syst_noinputs}}, \XX)$
is:
\begin{itemize}
    \item \emph{locally stable (LS)} if
    for all $\eps>0$, there exists $\delta>0$ such that
    \[\sup\{\|x_t(x_0)\|_\XX\mid t\geq0, \|x_0\|_\XX\leq \delta\}\leq \eps;
    \]
    
    \item \emph{globally attractive (GA)}
    if
    \begin{center}
    $\|x_t(x_0)\|_\XX\to0$ as $t\to+\infty$ for all $x_0\in\XX$;        
    \end{center}

    \item \emph{globally asymptotically stable (GAS)}
    if
    it is both \emph{LS} and \emph{GA};
    
    \item \emph{uniformly globally asymptotically stable (UGAS)}
    if
    there exists $\beta\in\KL$ such that
    \[
    \|x_t(x_0)\|_\XX\leq\beta(\|x_0\|_\XX, t),\qquad  \forall x_0\in\XX, \ \forall t\in\R_+.
    \]
\end{itemize}
\end{definition}

Unlike GAS, the UGAS property guarantees that starting from a bounded set of initial states, the convergence rate to zero cannot be arbitrarily slow, and the transient overshoot cannot be arbitrarily large.
From \cite{CWM24} we know that, unlike in ODE case \cite{LSW96}, for $(\text{TDS}, \XX^0)$, GAS does not imply uniform global attractivity, and by \cite[Proposition 7]{MaH23}, for $(\text{TDS}, \XX^0)$, GAS and uniform global attractivity still do not apply UGAS.
Nevertheless, the use of other state spaces, such as H\"older or Sobolev spaces, allows recovering the equivalence between GAS and UGAS \cite{karafyllis2022global}. It was shown there that GAS and UGAS are equivalent properties, provided that the system has BRS. Using our main result, we show that uniformity always holds in the $\XX^\infty$ setting, at least for systems with a finite number of discrete delays. 

\begin{theorem}\label{th:gas}
The following assertions are equivalent:
\begin{enumerate}[label = (\roman*)]
    \item \hypertarget{gas}{$\gas$}
    \item \hypertarget{ugas}{$\ugas$}
    \item \hypertarget{0_ugas}{$\zerougas$}
\end{enumerate}
Moreover, the following statement is implied by the above and does not imply the above:
\begin{enumerate}[label = (\roman*), resume]
    \item \hypertarget{0_gas}{$\zerogas$}
\end{enumerate}
\end{theorem}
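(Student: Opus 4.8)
The plan is to deduce the equivalences from the main result (Theorem~\ref{th:main}), from the known equivalence of GAS and UGAS under the BRS assumption established in \cite{karafyllis2022global}, and from standard $\KL$-estimate manipulations. The only nontrivial implications to establish are \hyperlink{gas}{$\gas$} $\Rightarrow$ \hyperlink{ugas}{$\ugas$} and the equivalence \hyperlink{ugas}{$\ugas$} $\Leftrightarrow$ \hyperlink{0_ugas}{$\zerougas$}; the implications \hyperlink{ugas}{$\ugas$} $\Rightarrow$ \hyperlink{gas}{$\gas$} and \hyperlink{ugas}{$\ugas$} $\Rightarrow$ \hyperlink{0_ugas}{$\zerougas$} (in the sense of $\XX^0\subset\XX^\infty$, so a $\KL$ bound on $\XX^\infty$ restricts to one on $\XX^0$), as well as \hyperlink{gas}{$\gas$} $\Rightarrow$ \hyperlink{0_gas}{$\zerogas$}, are immediate, and the non-implication \hyperlink{0_gas}{$\zerogas$} $\nRightarrow$ the rest is exactly \cite[Proposition 7]{MaH23} (or \cite{CWM24}).

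First I would show that \hyperlink{gas}{$\gas$} implies that $(\textnormal{\ref{syst_noinputs}},\XX^\infty)$ is BRS. Indeed, GAS in particular entails forward completeness of $(\textnormal{\ref{syst_noinputs}},\XX^\infty)$ (every trajectory is global), and since \eqref{syst_noinputs} is the input-free specialization of \eqref{syst} — formally, take $\UU$ to consist of a single input, or note that the reachable-set bounds of Theorem~\ref{th:main} hold uniformly over $\|u\|_{L^\infty}\le r$ and therefore in particular for the input-free case — the implication \hyperlink{xinffc}{$\xinffc$} $\Rightarrow$ \hyperlink{xinfbrs}{$\xinfbrs$} from Theorem~\ref{th:main} yields that $(\textnormal{\ref{syst_noinputs}},\XX^\infty)$ has bounded reachable sets. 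Then I would invoke the converse-Lyapunov / $\KL$-characterization machinery: GAS together with BRS implies UGAS. This is precisely the route taken in \cite{karafyllis2022global} for Sobolev and Hölder spaces, and the argument there uses only: (a) bounded reachable sets, (b) continuity of solutions in time from $t=0$ (which we have by Remark~\ref{rem:cont}), and (c) the semigroup/cocycle property of the flow (valid here by Theorem~\ref{thm-existence}); the particular choice of state space is immaterial once (a)--(c) hold, so the proof carries over verbatim to $\XX^\infty$. Concretely, LS plus BRS gives a uniform transient bound $\|x_t(x_0)\|_{\XX^\infty}\le\sigma(\|x_0\|_{\XX^\infty})$ for some $\sigma\in\Kinf$ and all $t\ge0$, and GA plus this uniform bound, via a standard argument (e.g. \cite[Lemma]{LSW96}-type reasoning, or the Sontag $\KL$-lemma applied to the sublevel sets), upgrades pointwise attractivity to a uniform $\KL$ estimate.

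Next I would settle \hyperlink{ugas}{$\ugas$} $\Leftrightarrow$ \hyperlink{0_ugas}{$\zerougas$}. The forward direction is trivial as noted. For \hyperlink{0_ugas}{$\zerougas$} $\Rightarrow$ \hyperlink{ugas}{$\ugas$}, the point is that an arbitrary $x_0\in\XX^\infty$ with $\|x_0\|_{\XX^\infty}\le r$ produces, after time $\thetamax$, a state $x_{\thetamax}(x_0)$ that lies in $\XX^0$ (solutions are continuous from $t=0$ by Remark~\ref{rem:cont}), with norm bounded by the BRS constant $\rho(r)$; applying the $\XX^0$-$\KL$ bound $\beta_0$ from time $\thetamax$ onward and the BRS transient bound on $[0,\thetamax]$, one assembles a global $\KL$ bound $\beta(r,t)$ on $\XX^\infty$ — e.g. $\beta(r,t):=\sigma(r)$ for $t\le\thetamax$ and $\beta(r,t):=\beta_0(\rho(r),t-\thetamax)$ for $t>\thetamax$, then majorized by a genuine $\KL$ function. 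Here one again uses that GAS$\Rightarrow$BRS on $\XX^\infty$, already obtained in the previous step.

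The main obstacle I anticipate is the bookkeeping in importing the ``GAS $+$ BRS $\Rightarrow$ UGAS'' result: \cite{karafyllis2022global} is stated for autonomous systems on Sobolev/Hölder state spaces, whose embeddings into $C^0$ are compact, and the compactness is used there to extract convergent subsequences of trajectories. On $\XX^\infty$ the embedding into $C^0$ is \emph{not} compact, and moreover the flow need not be continuous at the initial time (Remark~\ref{rem:flow}). I expect this to be circumvented exactly by the ``wait $\thetamax$'' trick: after time $\thetamax$ the flow becomes continuous (Remark~\ref{rem:flow}) and the relevant reachable sets are finite-dimensionally parametrized modulo an $L^\infty$ tail, so the needed compactness/continuity is recovered on $[\thetamax,\infty)$ while the short interval $[0,\thetamax]$ is handled purely by BRS. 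Making this reduction precise — i.e. checking that the $\KL$-characterization arguments only ever see the post-$\thetamax$ behavior plus a BRS bound on the initial segment — is the step that requires care; everything else is routine.
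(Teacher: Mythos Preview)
Your overall strategy is sound and close in spirit to the paper's, but the paper organizes the cycle more economically as (i)$\Rightarrow$(iii)$\Rightarrow$(ii). Rather than attempting GAS on $\XX^\infty$ $\Rightarrow$ UGAS on $\XX^\infty$ directly by adapting the proof of \cite{karafyllis2022global} to a space where its compactness arguments fail, the paper factors through $\XX^0$: GAS on $\XX^\infty$ gives FC on $\XX^\infty$, hence BRS on $\XX^0$ by Theorem~\ref{th:main}, and trivially GAS on $\XX^0$; then \cite[Theorem~1]{karafyllis2022global} applies \emph{as a black box on $\XX^0$} to yield $\zerougas$. Your ``wait $\thetamax$'' trick is then used only for the lift $\zerougas\Rightarrow\ugas$. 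This sidesteps entirely the obstacle you flag, with no need to re-examine the internals of \cite{karafyllis2022global}.

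There is one genuine gap in your sketch of (iii)$\Rightarrow$(ii). The BRS transient bound on $[0,\thetamax]$ is not automatically of class $\Kinf$ in $\|x_0\|_{\XX^\infty}$ --- it need not vanish at the origin --- so writing $\beta(r,t):=\sigma(r)$ for $t\le\thetamax$ does not produce a $\KL$ function. Your appeal to ``LS plus BRS'' to manufacture such a $\sigma$ is unavailable here: you are only assuming $\zerougas$, which gives LS on $\XX^0$, not on $\XX^\infty$. The paper closes this by a Gr\"onwall estimate using the local Lipschitz constant $\kappa$ of $f$ together with $f(0,0)=0$: for $t\in[0,\theta_p]$ one has $\|x_t(x_0)\|_{\XX^\infty}\le e^{\theta_p\kappa(\mu(\theta_p,\|x_0\|_{\XX^\infty}))}\|x_0\|_{\XX^\infty}$, which does vanish at $0$ and splices correctly with the $\XX^0$-$\KL$ bound for $t\ge\theta_p$. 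Also, in that same step, obtain BRS on $\XX^\infty$ from $\zerougas\Rightarrow$ BRS on $\XX^0$ $\Rightarrow$ BRS on $\XX^\infty$ via Theorem~\ref{th:main}, not from GAS on $\XX^\infty$ (which you are not assuming in this implication).
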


The proof architecture is depticted by Figure~\ref{fig2}.

\begin{figure}[!ht]
\centering
\begin{equation*}
\xymatrix{
\ugas
\ar@[blue]@{<=>}[rr]
\ar@{=>}[dd]
&&
\zerougas
\ar@{=>}[dd]
\\
\\
\gas
\ar@{=>}[rr]
\ar@{=>}@[blue][rruu]
&&
\zerogas
\ar@<-3ex>@{=>}[uu]_<<<<<<<<{\text{\cite{MaH23}}}|(.50){\text{\Large $\diagup$}}|(.40){}
}
\end{equation*}
\caption{Proof architecture of Theorem \ref{th:gas}. Implications proved in the present article are depicted in blue. Implications known from the literature, trivial implications, as well as an implication known to be false ($\not\Rightarrow$) are indicated in black. Together, these implications are sufficient to establish Theorem \ref{th:gas}.}
\label{fig2}
\end{figure}
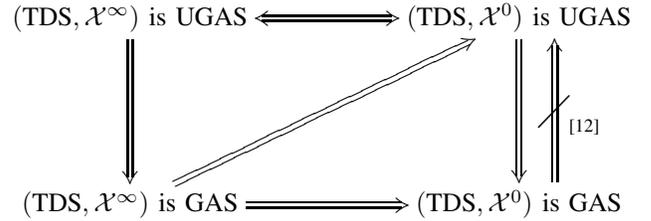

\begin{proof}
It is clear from the definitions that
\begin{center}
\hyperlink{ugas}{$\ugas$} $\Rightarrow$ \hyperlink{gas}{$\gas$},
\\
\hyperlink{0_ugas}{$\zerougas$} $\Rightarrow$ \hyperlink{0_gas}{$\zerogas$}.
\end{center}
From the inclusion $\XX^0\subset \XX^\infty$, and since on $\XX^0$ the norms of $\XX^0$ and $\XX^\infty$ coincide, it follows that 
\begin{center}
\hyperlink{ugas}{$\ugas$} $\Rightarrow$ \hyperlink{0_ugas}{$\zerougas$},
\\
\hyperlink{gas}{$\gas$} $\Rightarrow$ \hyperlink{0_gas}{$\zerogas$}.
\end{center}
According to \cite[Proposition 7]{MaH23},
\begin{center}
    \hyperlink{0_gas}{$\zerogas$} $\nRightarrow$ \hyperlink{0_ugas}{$\zerougas$}.
\end{center}
Hence, it remains to prove  that
\begin{center}
 \hyperlink{gas}{$\gas$} $\Rightarrow$ \hyperlink{0_ugas}{$\zerougas$}
 \\
 $\Rightarrow$ \hyperlink{ugas}{$\ugas$}.
\end{center}

\textit{Proof that}
\hyperlink{gas}{$\gas$} $\Rightarrow$ \hyperlink{0_ugas}{$\zerougas$}.

Assume that $\gas$. In particular, $\xinffcnou$. Hence, according to Theorem \ref{th:main}, $\xzerobrsnou$. Moreover, $\zerogas$. Hence, according to \cite[Theorem 1]{karafyllis2022global}, $\zerougas$.

\smallskip

\textit{Proof that}
\hyperlink{0_ugas}{$\zerougas$} $\Rightarrow$ \hyperlink{ugas}{$\ugas$}.
As $\zerougas$, $\xzerobrsnou$. 
By Theorem \ref{th:main}, $\xinfbrsnou$.
Applying\footnote{
Note that system \eqref{syst_noinputs} does not exactly fit the framework of \cite[Lemma 3]{MiW18b}, since its flow is not necessarily continuous when $\XX$ is $\XX^\infty$ (see Remark \ref{rem:cont}). However, the proof of \cite[Lemma 3]{MiW18b} does not exploit at all continuity of the flow, hence remains valid in our context.
}
\cite[Lemma 3]{MiW18b} to \eqref{syst_noinputs}, we get that 
there exists a continuous function $\mu:\R_+^2\to\R_+$, non-decreasing with respect to each variable, such that
\[
\|x_t(x_0)\|_{\XX^\infty}\leq \mu(t, \|x_0\|_{\XX^\infty}),\qquad x_0\in\XX^\infty, \quad t\geq 0.
\]
In particular, for all $t\in[0, \theta_p]$,
$$
\|x_t(x_0)\|_{\XX^\infty}\leq \mu(\theta_p, \|x_0\|_{\XX^\infty}).
$$




Let $\kappa:\R_+\to\R_+$ be a continuous function such that for all $r>0$, $\kappa(r)$ is greater than the Lipschitz constant of $f$ over the ball of $\XX^\infty$ of center $0$ and radius $r$.
Then, by Grönwall's inequality and using that $f(0,0)=0$, for all $t\in[0, \theta_p]$ and all $x_0\in\XX^\infty$,
\begin{align}\label{eq:beta1}
\|x_t(x_0)\|_{\XX^\infty} \leq e^{\theta_p\kappa(\mu(\theta_p, \|x_0\|_{\XX^\infty}))} \|x_0\|_{\XX^\infty}.
\end{align}
On the other hand, using the cocycle property,
$x_t(x_0) = x_{t-\theta_p}(x_{\theta_p}(x_0))$ for all $t\geq\theta_p$ and all $x_0\in\XX$.
Since $x(\cdot; x_0)$ is continuous over $[0, +\infty)$,
$x_{\theta_p}(x_0)\in C^0([-\theta_p, 0], \R^n)$.
Thus,
$x(\cdot; x_{\theta_p}(x_0), 0)$ is an $\XX^0$-solution of \eqref{syst_noinputs}.
Hence, there exists $\beta\in\KL$ such that
for all $x_0\in\XX^\infty$ and all $t\geq \theta_p$,
\begin{align}
\|x_t(x_0)\|_{\XX^\infty}
&\leq \beta(\|x_{\theta_p}(x_0)\|_{\XX^\infty}, t-\theta_p)\nonumber
\\
&\leq \beta(e^{\theta_p\kappa(\mu(\theta_p, \|x_0\|_{\XX^\infty}))} \|x_0\|_{\XX^\infty}, t-\theta_p).
\label{eq:beta2}
\end{align}
For all $r, t\geq0$, define
\begin{equation*}
\tilde\beta(r, t) = \begin{cases}
    \beta(r, 0)e^{-t} &\text{if } t\in[-\theta_p, 0)
    \\
    \beta(r, t) &\text{if } t\geq 0
\end{cases}
\end{equation*}
and
\begin{equation*}
\bar\beta(r, t) = \max(e^{\theta_p-t}e^{\theta_p\kappa(\mu(\theta_p, r))} r, \tilde\beta(e^{\theta_p\kappa(\mu(\theta_p, r))} r, t-\theta_p)).
\end{equation*}
Then, $\tilde\beta(\cdot, \cdot-\theta_p)\in\KL$ since $\beta\in\KL$, $(r, t)\mapsto e^{\theta_p-t}e^{\theta_p\kappa(\mu(\theta_p, r))} r$ is of class $\KL$, and $\bar\beta\in\KL$ as the maximum between two functions of class $\KL$.
Combining \eqref{eq:beta1} and \eqref{eq:beta2}, we obtain that for all $x_0\in\XX^\infty$ and all $t\in\R_+$,
\begin{align*}
\|x_t(x_0)\|_{\XX^\infty}
\leq
\bar\beta(\|x_0\|_{\XX^\infty}, t).
\end{align*}
Thus, $\ugas$.
\end{proof}

\section{Conclusion}

We have shown that, when working with the state space $L^\infty\times\R^n$, FC implies BRS for time-delay systems with a finite number of discrete delays.
It is worth noticing that this space is larger than the usual space of continuous functions.
Hence, it is now established that the equivalence between FC and BRS for time-delay systems fails in the state space of continuous functions \cite{MaH23}, but holds both for a smaller state space (Sobolev or Hölder, \cite{karafyllis2022global}) and for a larger one ($L^\infty\times\R^n$, present paper).

This equivalence in $L^\infty\times\R^n$ opens the door to the relaxation of many characterizations of stability properties such as ISS, UGAS, etc. (see \cite{MWC24} for corresponding results in the state space of continuous functions), at the cost of recasting all the results that are known in the continuous state space \cite{CKP23} into $L^\infty\times\R^n$. We have illustrated this by proving that GAS implies UGAS in the new framework.

The question of extending these results to systems with distributed delays remains open. Firstly, to the best of the authors' knowledge, the proof of the existence of solutions in the state space $L^p\times\R^n$ fails when $p=\infty$ (see \cite{DELFOUR1972213}). Secondly, even if a solution exists for a specific system, the approach developed in this paper (based on relating the delay system with an ODE) fails. Hence, new tools have to be developed.

\bibliographystyle{abbrv}
\bibliography{references.bib}

\end{document}